\theoremstyle{plain}
\newtheorem{thm}{Theorem}[section]
\newtheorem{prop}[thm]{Proposition}
\newtheorem{cor}[thm]{Corollary}
\newtheorem{lem}[thm]{Lemma}
\newtheorem*{main_thm*}{Main Theorem}
\theoremstyle{definition}
\newtheorem{defn}[thm]{Definition}
\newtheorem{rem}[thm]{Remark}
\setlist[description]{
	font = \normalfont}
\setlist[enumerate]{label=(\arabic*)}
\newcommand{\rrank}[2][1.5]{%
	{}\mkern#1mu\overline{\mkern-#1mu#2}}
\DeclareMathOperator{\rank}{rank}
\DeclareMathOperator{\kr}{\rrank{\kappa}}
\newcommand{\bs}{\backslash}
\DeclareMathOperator{\G}{\mathcal{G}}
\DeclareMathOperator*{\FreeProd}{\mathlarger{\mathlarger\ast}}
\title{A Bass--Serre theoretic proof of a theorem of Burns and Romanovskii}
\author{Naomi Andrew}
\date{}
\begin{document}

\begin{abstract}
A well known theorem of Burns and Romanovskii states that a free product of subgroup separable groups is itself subgroup separable. We provide a proof using the language of immersions and coverings of graphs of groups, due to Bass.
\end{abstract}

\maketitle

\section{Introduction}

Subgroup separability is a strengthening of residual finiteness. It has many equivalent definitions; we will use the following:

\begin{defn}
Let $G$ be a group and $H$ be a subgroup of $G$. Say $H$ is \emph{separable in $G$} if for every element $g \in G\smallsetminus H$, there is a finite index subgroup $K$ of $G$ containing $H$ and not $g$.

If every finitely generated subgroup of $G$ is separable in $G$, say that $G$ is \emph{subgroup separable}.
\end{defn}

This note is concerned with the following theorem, giving that subgroup separability is closed under finite free products:

\begin{main_thm*}
Suppose $G$ is a finite free product of subgroup separable groups. Then $G$ is itself subgroup separable.
\end{main_thm*}

This theorem is originally due (independently) to Romanovski \cite{Romanovskii1969} and Burns \cite{Burns1971}, and there is a subsequent proof due to Wilton \cite{Wilton2007}.

The object of this paper is to provide a new proof of this theorem, generalising the proof Stallings gives in~\cite{Stallings1983} that free groups are subgroup separable (a theorem originally due to Hall \cite{HallFreeGpLERF}; see also \cite{BurnsFreeGpLERF}) to graphs of groups with trivial edge groups.

The notions of immersions and coverings of graphs of groups, due to Bass, are rather more technical than those used by Stallings for graphs. So we begin by covering the necessary definitions for graphs of groups, then the notion of \emph{Kurosh rank} for a subgroup of a free product (given an action on a tree). Given a group acting on any set (or in particular a tree) we provide a way to calculate the index of a subgroup from its action in Lemma~\ref{lem:counting_orbits}. Finally in Section 4 we combine these results to show how to complete an immersion of graphs of groups to a cover, and how this implies the Burns--Romanovskii theorem.

\section{Graphs of Groups}
We follow Bass' exposition \cite{Bass1993}, although we change some notation. There are other sources covering the same material, such as~\cite{Serre2003}. Unlike many expositions, we put the action on the right.%; consequently most cosets are left cosets.

Graphs of groups are a combinatorial tool encoding group actions on trees: they consist of a graph corresponding to the quotient together with edge and vertex groups corresponding to stabilisers.

\begin{defn}
\label{def:graph}
A graph $\Gamma$ consists of a set of vertices $V\Gamma$ and a set of edges $E\Gamma$, together with two maps: $\iota: E\Gamma \to V\Gamma$; and an involution $E\Gamma \to E\Gamma, e \to \overline{e}$. We also define $\tau: E\Gamma \to V\Gamma, \tau(e)=\iota(\overline{e})$. An \emph{orientation} of $\Gamma$ is a choice of one edge from each pair $\{e,\overline{e}\}$.
\end{defn}

\begin{defn}
\label{def:graph_of_groups}
A \emph{Graph of Groups}, $\mathcal{G}$, consists of \begin{itemize}
\item a connected graph $\Gamma_\mathcal{G}$;
\item for each vertex $v$ of $\Gamma_\mathcal{G}$, a group $G_v$;
\item for each edge $e$ of $\Gamma_\mathcal{G}$, a group $G_e$ such that $G_e = G_{\overline{e}}$ and there is a monomorphism $\alpha_e:G_e \to G_{\tau(e)}$.
\end{itemize}
\end{defn}

Where the graph of groups is clear, we may just refer to $\Gamma$ for the underlying graph.

There are two main ways of defining the fundamental group and universal cover of a graph of groups: by a maximal tree, and by considering loops at a base point. We follow Bass, and consider paths and loops in the graph of groups.

\begin{defn}[Paths]
Let $F(\mathcal{G})$ be the group generated by all the vertex groups and all the edges of $\mathcal{G}$, subject to relations $e\alpha_e(g)\overline{e}=\alpha_{\overline{e}}(g)$ for $g \in G_e$. Note that taking $g=1$ this gives that $e^{-1}=\overline{e}$, as expected.

Define a \emph{path} (of length $n$) in $F(\mathcal{G})$ to be a sequence $g_0e_1g_1 \dots e_ng_n$, where each $e_i$ has $\iota(e_i)=v_{i-1}$ and $\tau(e_i)=v_i$ for some vertices $v_i$ (so there is a path in the graph), and each $g_i \in G_{v_i}$. A \emph{loop} is a path where $v_0=v_n$.
\end{defn}

The set of all paths in $F(\mathcal{G})$ forms a groupoid (sometimes called the fundamental groupoid of $\mathcal{G}$).

\begin{defn}[Reduced paths]
A path is \emph{reduced} if it contains no subpath of the form  $e\alpha_e(g)\overline{e}$ (for $g \in G_e$). A loop is \emph{cyclically reduced} if, in addition to being reduced, $e_n(g_ng_0)e_1$ is not of the form  $e\alpha_e(g)\overline{e}$.
\end{defn}

Every path is equivalent (by the relations for $F(\mathcal{G})$) to a reduced path, and similarly every loop is equivalent to both a reduced loop and a cyclically reduced loop. In general these reduced representations are not unique, although all equivalent (cyclically) reduced paths (or loops) will have the same edge structure. Note that a cyclically reduced loop might not be at the same vertex as the original loop.

\begin{defn}The \emph{fundamental group of $\mathcal{G}$} at a vertex $v$ is the set of loops in $F(\mathcal{G})$ at $v$, and is denoted $\pi_1(\mathcal{G},v)$.
\end{defn}

The isomorphism class of this group does not depend on the vertex chosen. (In fact, the two groups obtained by choosing different base vertices are conjugate in the groupoid.)

We define the Bass--Serre tree (or universal cover) in the corresponding way:

%\textcolour{red}{check this *right* action is actually well behaved}
\begin{defn}[Bass--Serre Tree]
Let $T$ be the graph formed as follows: the vertex set consists of `cosets' $G_{w}p$, where $p$ is a path in $F(\mathcal{G})$ from $w$ to $v$. There is an edge(-pair) joining two vertices $G_{w_1}p_1$ and $G_{w_2}p_2$ if $p_1=eg_{w_2}p_2$ or $p_2=eg_{w_1}p_1$ (with $g_w \in G_w$).
\end{defn}

This graph is a tree, and there is a right action of $\pi_1(\mathcal{G},v)$ on the vertex set, since this multiplication is possible in the groupoid, and the paths will still start at $v$. This action preserves adjacency and is without inversions, and so $\pi_1(\mathcal{G},v)$ acts on $T$.

There is another construction, that takes a group action on a tree and returns a graph of groups:

\begin{defn}[Quotient graph of groups]
\label{def:quotient_gog}
Suppose a group $G$ acts on a tree $T$. Form a graph of groups whose underlying graph to be the quotient graph of the action, with edge and vertex groups are assigned as follows: choose subtrees $T^v \subseteq T^e$ such that $T^v$ contains exactly one representative of each vertex orbit (that is, a lift of a maximal tree in the quotient), and $T^e$ exactly one representative of each edge orbit, in such a way that at least one end of every edge is in $T^v$. We abuse notation a little by identifying vertices in $T^v$ and edges in $T^e$ with their orbits (that is, their image in the quotient graph). Set the vertex and edge groups to be the stabilisers $G_v$ and $G_e$. To define the monomorphisms, we choose elements $g_v \in G$ which act to bring each vertex of $T^e$ into $T^v$: if $v \in T^v$ then set $g_v =1$, and otherwise choose any element with this property. Now we may set the monomorphisms $\alpha_e$ to be the composition of the inclusion with conjugation by our chosen elements (so $s \mapsto g_{\tau(e)}^{-1}sg_{\tau(e)}$).
\end{defn}
In many cases, the full complexity of this definition is unnecessary: we can consider the stabiliser of \emph{any} orbit representative and assert that the injection is the composition of an inclusion and the relevant conjugation.

\begin{thm}
\label{thm:structure_theorem}
Up to isomorphism of the structures concerned, the processes of constructing the quotient graph of groups, and of constructing the fundamental group and Bass--Serre tree are mutually inverse.
\end{thm}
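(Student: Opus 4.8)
The statement asserts that the two constructions are mutually inverse, so the plan is to fix each composite and verify it returns the original data up to the relevant isomorphism; I would handle the two directions separately. Beginning with a graph of groups $\mathcal{G}$, I would form $G = \pi_1(\mathcal{G},v)$ together with its action on the Bass--Serre tree $T$ and check that the quotient graph of groups recovers $\mathcal{G}$. The central computation is the description of the $G$-orbits on $T$: any two paths $p, q$ from a fixed vertex $w$ to $v$ differ by the loop $p^{-1}q \in \pi_1(\mathcal{G},v)$, and $(G_w p)\cdot(p^{-1}q) = G_w q$, so the cosets $G_w p$ for fixed $w$ form a single orbit, while the source vertex $w$ is an orbit invariant; the same bookkeeping on the edge(-pairs) gives a graph isomorphism between the quotient graph and $\Gamma_\mathcal{G}$. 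Taking $G_v 1_v$ as the representative of the base vertex, its stabiliser is exactly $\{h \in G : G_v h = G_v\} = G_v$, and for a general vertex $w$ a representative $G_w p$ has stabiliser the groupoid-conjugate $p^{-1}G_w p \cong G_w$; the analogous edge computation returns $\alpha_e(G_e) \cong G_e$.

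To finish this direction I would match the monomorphisms: the conjugating elements $g_v$ demanded by Definition~\ref{def:quotient_gog} can be taken to be the edges $e$ themselves, and the defining relations $e\alpha_e(g)\overline{e} = \alpha_{\overline{e}}(g)$ of $F(\mathcal{G})$ then reproduce the original $\alpha_e$ exactly. In the reverse direction, starting from an action of $G$ on a tree $T$ with quotient graph of groups $\mathcal{G}$, I would build a map $\Phi : \pi_1(\mathcal{G},v) \to G$ sending each vertex group $G_v$ (a literal stabiliser, hence already a subgroup of $G$) identically into $G$ and each edge $e$ to the chosen translating element $g_{\tau(e)}$. The first task is well-definedness: since $\alpha_e$ is conjugation by $g_{\tau(e)}$, the relations of $F(\mathcal{G})$ are respected, so $\Phi$ descends from the fundamental groupoid to a homomorphism on loops. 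Surjectivity follows from connectivity of $T$ and the standard fact that $G$ is generated by the vertex stabilisers together with the translating elements. I would then exhibit the $\Phi$-equivariant tree isomorphism $T' \to T$ by sending a vertex $G_w p$ of the Bass--Serre tree to the image of the fixed lift of $v$ under the groupoid element $\Phi(p)$, and check it is well-defined on cosets, adjacency-preserving, and intertwines the two actions.

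The main obstacle, on which both the injectivity of $\Phi$ and the fidelity of the tree isomorphism rest, is a normal form result of Britton's-lemma type: a cyclically reduced loop of positive edge-length must represent a nontrivial element, or equivalently the coset model of $T$ genuinely yields a tree with the asserted stabilisers and no collapse. I would isolate this as the crux and prove it by induction on edge-length, tracking how each successive edge of a reduced path pushes the base vertex strictly outward in $T$ (a ping-pong argument), so that a reduced path cannot return to represent the identity. The remaining effort is bookkeeping: carefully aligning the conjugating elements $g_v$ across the two constructions, and confirming that different choices of those elements, of orbit representatives, and of base vertex produce structures that are isomorphic rather than literally equal---precisely the latitude encoded in the phrase ``up to isomorphism''.
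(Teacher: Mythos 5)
There is nothing in the paper to compare against here: Theorem~\ref{thm:structure_theorem} is stated as background --- it is Bass's structure theorem, the ``fundamental result linking actions on trees with splittings of groups'' --- and the paper gives no proof, deferring implicitly to \cite{Bass1993} and \cite{Serre2003}. Your sketch is, in outline, the standard proof found in those sources: you check both composites, compute orbits and stabilisers in the coset model of the Bass--Serre tree, use generation of $G$ by vertex stabilisers and translating elements for surjectivity, and correctly isolate the crux, namely the normal-form statement that a reduced loop of positive edge-length is nontrivial (equivalently, that the coset graph is a tree), to be proved by induction/ping-pong. That is the right skeleton, and the orbit--stabiliser computations you give in the first direction are accurate.

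Three points would need repair or expansion before this is a proof rather than a plan. First, the equivariant map on trees should send the vertex $G_w p$ to the translate of the preferred lift of $w$ (the initial vertex of the path $p$) by $\Phi(p)$, not to a translate of the lift of $v$; as written the map does not even land in the correct $G$-orbit, so it cannot induce the identity on quotient graphs. Second, sending each edge $e$ to $g_{\tau(e)}$ respects the relations $e\alpha_e(g)\overline{e}=\alpha_{\overline{e}}(g)$ only when $\iota(e)$ lies in the subtree $T^v$ of Definition~\ref{def:quotient_gog}; in general the correct image is $g_{\tau(\overline{e})}^{-1}g_{\tau(e)}$, and with your formula well-definedness fails for edges of $T^e$ with $\iota(e)\notin T^v$. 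Third, the normal-form lemma you flag as the crux is where essentially all of the content of the theorem lives (injectivity of $\Phi$, the tree being a tree, and the stabiliser identifications all hang on it), and it is only named with a one-line strategy; a complete write-up would have to carry out that induction, which is roughly the length of the rest of the argument combined.
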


From the perspective of groups acting on trees, the isomorphisms required are an isomorphism between the original group and the fundamental group, and an equivariant isometry between the original tree and the Bass--Serre tree. From the perspective of graphs of groups, they are an isomorphism of underlying graphs, together with isomorphisms of corresponding edge and vertex groups (and these must respect the edge monomorphisms).
This is the fundamental result linking actions on trees with splittings of groups. 

\subsection*{Morphisms, immersions and covers}
If $G$ is the fundamental group of a graph of groups, then any subgroup of $G$ will also act on the Bass--Serre tree, and this action will give a quotient graph of groups carrying that subgroup. In the case of a free action (where $G$ must be a free group), then we know that the quotient graph is a cover of the original graph - in fact, there is a correspondence between covers and subgroups. This point of view has been fruitful for investigating free groups, and is the main tool of Stallings' paper \cite{Stallings1983}. The aim of Bass' definitions of morphisms and covers (and immersions) is to recover the same correspondence for graphs of groups.% The details are, as Bass put it, `regrettably more technical than one might anticipate'. However, the idea to keep in mind throughout is understanding subgroups through their action on the Bass--Serre tree, and recovering that data from a graph of groups (with additional labels).

There is a lot of structure, and so any definition of a morphism must feature a graph map and several group homomorphisms. %, and we should also expect there to be some compatibility requirements on the group homomorphisms. 
It turns out that slightly more data is needed as well, in the form of group elements attached to each edge and vertex.% This turns out to be necessary for recovering (some form of) the path-lifting property, which anything calling itself a cover ought to have!

The definitions we give here are specialised to the case of free products - that is, when all $G_e$ are trivial. In general the elements $\delta_e$ defined below must satisfy conditions involving the edge group inclusions, but these are automatically satisfied for any choices with trivial edge groups.

\begin{defn}
Suppose $\mathcal{H}$ and $\mathcal{G}$ are graphs of groups with all edge groups trivial.
A \emph{morphism} of graphs of groups $\Phi:\mathcal{H} \to \mathcal{G}$ consists of: \begin{itemize}
\item a graph morphism $\varphi: \Gamma_\mathcal{H} \to \Gamma_\mathcal{G}$;
\item a group homomorphism $\phi_v:H_v \to G_{\varphi(v)}$ for every vertex $v \in \Gamma_\mathcal{H}$;
\item an element $\lambda_v$ in $\pi_1(\mathcal{G},\varphi(v))$ for every vertex $v$ of $\Gamma_\mathcal{H}$;
\item an element $\delta_e \in G_{\varphi(\iota(e))}$ for every edge of $\Gamma_\mathcal{H}$.%, satisfying \hbox{$\phi_{\iota(e)}(\alpha_e(s))=\delta_e^{-1}\alpha_{\varphi(e)}(\phi_e(s))\delta_e$}
\end{itemize}
\end{defn}

Such a morphism induces maps on the structures that can be defined from a graph of groups, as follows: \begin{itemize}
\item A homomorphism (of groups) $F(\mathcal{H})\to F(\mathcal{G})$ by $s \mapsto \lambda_v^{-1}\phi_v(s)\lambda_v$ for $s \in H_v$ and $e \mapsto \lambda_{\iota(e)}^{-1}\delta_e^{-1} e \delta_{\overline{e}}\lambda_{\tau(e)}$.
\item A homomorphism $\Phi_P$ of fundamental groupoids, by restricting that map to the paths in $F(\mathcal{H})$. Note that, in this case, for each edge $e$, the extra elements introduced at $e$, $\iota(e)$ and $\tau(e)$ will cancel to leave $\delta(e)$ and $\delta(\overline{e})$ which are elements of the vertex groups at either end.
\item A homomorphism $\Phi_v: \pi_1(\mathcal{H},v) \to \pi_1(\mathcal{G},\varphi(v))$ of the fundamental groups, by further restricting the above map to loops at $v$.
\item An equivariant graph map $\tilde{\Phi}$ on the Bass--Serre trees, defined on vertices by $H_wp \mapsto G_{\varphi(w)}\lambda_w\Phi_P(p)$.
\end{itemize}

Additionally, we can define a `local map' at each edge of $\mathcal{G}$. (Requiring edge stabilisers to be trivial simplifies this considerably compared to Bass' general definition.)

Given a vertex $v$ and an edge with $\tau(e)=v$, the lifts of $e$ at a single vertex in the Bass--Serre tree correspond to the elements of $G_v$, by identifying the edge $[G_vp,G_wesp]$ with the element $s$.

Given a morphism $\Phi: \mathcal{H} \to \mathcal{G}$, let $v$ be a vertex of $\Gamma_{\mathcal{H}}$ and $f$ be an edge of $\Gamma_{\mathcal{G}}$ with $\tau(f)=\varphi(v)$. Define a map
\[\Phi_{v/f}: \coprod_{e \in \varphi^{-1}(f),\tau(e)=v} H_v \to G_{\varphi(v)}\]
by
\[h \mapsto \delta_{\overline{e}}\phi_v(h).\]

Alternatively, we can view $\Phi_{v/f}$ as a map $H_v \times \{e \in E(\mathcal{H}): \iota(e)= v, \varphi(e)=f\} \to G_{\varphi(v)}$ taking $(H_v,e) \mapsto \delta_e \phi_v(H_v)$. These maps are useful for ``locally'' understanding the image of the Bass--Serre tree under a morphism: see Proposition~\ref{prop:immersion_charac}.

Given two group actions on trees, and an equivariant map between the trees we can induce a graph of groups morphism between the quotient graphs of groups. We continue to assume that the actions are free on edges.

\begin{prop}
\label{prop:induced_morphism}
Suppose $S$ is an $H$-tree, $T$ a $G$-tree, $\psi: H \to G$ is a homomorphism and $f: S \to T$ is a $\psi$-invariant graph map. (That is, $f$ sends vertices to vertices, edges to edges, preserves adjacency, and $vhf=(vf)(h\psi)$.) Let $\mathcal{H}$ and $\mathcal{G}$ be the quotient graphs of groups corresponding to the actions of $H$ on $S$ and $G$ on $T$ respectively. Then $\psi,f$ induce a graph of groups morphism $\mathcal{H} \to \mathcal{G}$, which (after the isomorphisms required by Theorem~\ref{thm:structure_theorem}) recover $\psi$ and $f$ as maps of fundamental groups and Bass--Serre trees.
\end{prop}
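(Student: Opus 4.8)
The plan is to build the four pieces of data of a morphism directly out of $\psi$ and $f$, observe that the (here vacuous) morphism axioms hold, and then check that the induced maps agree with $\psi$ and $f$ once the identifications of Theorem~\ref{thm:structure_theorem} are applied. Fix the data of Definition~\ref{def:quotient_gog} used to build the two quotient graphs of groups: lifts $S^v \subseteq S^e$ with conjugators $h_w$ for $\mathcal{H}$, and $T^v \subseteq T^e$ with conjugators $g_u$ for $\mathcal{G}$; write $\tilde{v} \in S^v$ and $\widetilde{\varphi(v)} \in T^v$ for the chosen representatives of a vertex orbit $v$ of $\mathcal{H}$ and of its image. Since $f$ satisfies $vhf = (vf)(h\psi)$ it carries $H$-orbits into $G$-orbits, so it descends to a graph morphism $\varphi := \bar{f}: \Gamma_{\mathcal{H}} \to \Gamma_{\mathcal{G}}$; this is the first piece of data, and I take the basepoint of $\mathcal{G}$ to be $\varphi(v_0)$, where $v_0$ is the basepoint of $\mathcal{H}$.

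For the vertex homomorphisms, note that $\tilde{v}f$ lies in the $G$-orbit of $\widetilde{\varphi(v)}$, so I may choose $c_v \in G$ with $(\tilde{v}f)c_v = \widetilde{\varphi(v)}$, unique up to right multiplication by $G_{\varphi(v)} = \mathrm{Stab}_G(\widetilde{\varphi(v)})$. For $h \in H_v = \mathrm{Stab}_H(\tilde{v})$ the element $h\psi$ fixes $\tilde{v}f$, hence $c_v^{-1}(h\psi)c_v$ fixes $\widetilde{\varphi(v)}$; I set $\phi_v(h) := c_v^{-1}(h\psi)c_v \in G_{\varphi(v)}$, which is a homomorphism. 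The remaining elements record the failure of $f$ to send chosen lifts to chosen lifts: informally $\lambda_v$ is the correction element $c_v^{-1}$ read as a loop at $\varphi(v)$ through the isomorphism $G \cong \pi_1(\mathcal{G},\varphi(v))$, and for an edge $e$ with lift $\tilde{e} \in S^e$ the element $\delta_e \in G_{\varphi(\iota(e))}$ compares $f(\tilde{e})$ with the lift $\widetilde{\varphi(e)} \in T^e$, absorbing the conjugators $g_u$. Their precise values must be fixed together so that the induced path map $\Phi_P$ carries each maximal-tree geodesic of $\mathcal{H}$ to the corresponding geodesic of $\mathcal{G}$; because all edge groups are trivial there are no compatibility conditions to impose on the $\delta_e$, so any such choice produces a morphism $\Phi : \mathcal{H} \to \mathcal{G}$.

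It remains to verify the recovery statement, which is the substance of the proposition; I would do the fundamental-group identity first and then deduce the tree identity. Writing $\Theta_{\mathcal{H}} : H \to \pi_1(\mathcal{H},v_0)$, $\Theta_{\mathcal{G}} : G \to \pi_1(\mathcal{G},\varphi(v_0))$ and $\Xi_{\mathcal{H}} : S \to \tilde{\mathcal{H}}$, $\Xi_{\mathcal{G}} : T \to \tilde{\mathcal{G}}$ for the isomorphisms of Theorem~\ref{thm:structure_theorem} (so that, concretely, $\tilde{v}h \mapsto H_v p_v \Theta_{\mathcal{H}}(h)$ with $p_v$ the maximal-tree path from $v$ to $v_0$), the claim $\Phi_{v_0} \circ \Theta_{\mathcal{H}} = \Theta_{\mathcal{G}} \circ \psi$ is checked on the generators of $\pi_1(\mathcal{H},v_0)$: on a vertex-group element it unwinds to the definition $\phi_v(h) = c_v^{-1}(h\psi)c_v$, and on an edge generator to the definition of $\delta_e$, both of which were arranged to make the two sides agree. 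Once this is known, both $\tilde{\Phi} \circ \Xi_{\mathcal{H}}$ and $\Xi_{\mathcal{G}} \circ f$ are equivariant with respect to the same homomorphism $H \to \pi_1(\mathcal{G},\varphi(v_0))$, so to prove $\tilde{\Phi} \circ \Xi_{\mathcal{H}} = \Xi_{\mathcal{G}} \circ f$ it is enough to check it on the orbit representatives $\tilde{v}$; using $\tilde{v}f = \widetilde{\varphi(v)}c_v^{-1}$ and $\tilde{\Phi}(H_v p) = G_{\varphi(v)}\lambda_v \Phi_P(p)$ this reduces to the single identity $\lambda_v \Phi_P(p_v) = q_{\varphi(v)}\Theta_{\mathcal{G}}(c_v^{-1})$ modulo $G_{\varphi(v)}$, where $q_{\varphi(v)}$ is the geodesic downstairs, which again holds by the choice of $\lambda_v$ and $\delta_e$.

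The main obstacle is exactly this bookkeeping: tracking the correction elements $c_v$, the conjugators $g_u, h_w$ built into Definition~\ref{def:quotient_gog}, and the basepoint-change geodesics $p_v, q_{\varphi(v)}$ simultaneously, all with the action written on the right, and in particular pinning down $\lambda_v$ and $\delta_e$ exactly (rather than up to the ambiguities above) so that the path-map identity holds. The verification that $\Phi$ is a morphism, by contrast, is immediate once the data are in place, since the triviality of the edge groups removes every compatibility condition that $\delta_e$ would otherwise have to satisfy.
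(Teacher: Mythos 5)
Your construction of $\varphi$, $\phi_v$ and $\lambda_v$ is exactly the paper's: $f$ descends to a map of quotient graphs, $\phi_v(h)=c_v^{-1}\psi(h)c_v$ for a correction element $c_v$ (the paper's $k_v$) bringing $f(\tilde v)$ into $T^v$, and $\lambda_v = c_v^{-1}$ read through the isomorphism of Theorem~\ref{thm:structure_theorem}. The genuine gap is $\delta_e$: you never construct it. You only stipulate that the $\delta_e$ ``must be fixed together so that the path-map identity holds,'' and you yourself flag this as the main obstacle --- but that is precisely where the content of the proposition lies, and postulating values that make the recovery statement true is not an existence proof. Moreover, the normalisation you propose to pin them down --- that $\Phi_P$ carry each maximal-tree geodesic of $\mathcal{H}$ to ``the corresponding geodesic of $\mathcal{G}$'' --- is not well-posed: $\varphi$ has no reason to carry the maximal tree of $\Gamma_{\mathcal{H}}$ into a maximal tree of $\Gamma_{\mathcal{G}}$, and the $\Phi_P$-image of a tree geodesic will in general traverse non-tree edges and carry non-trivial group elements, so there is no such ``corresponding geodesic'' to match.

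The paper fills exactly this hole with an explicit formula. For an edge $e$ of $S^e$ with $v=\iota(e)$, choose $k_e$ (with $k_e = k_{\overline{e}}$) so that $f(e)k_e \in T^e$, and set $x = f(v)k_e$ and $y = vh_v$; then $\delta_e = g_x^{-1}k_e^{-1}\psi(h_v)k_y$. This lies in $G_{\varphi(v)}$ because both $\psi(h_v)k_y$ and $k_eg_x$ move $f(v)$ to the chosen $T^v$-representative of its orbit, so the element above stabilises that representative --- this membership check is itself a step your sketch does not address, since until $\delta_e$ is written down there is nothing to check. Once this formula is in place, the rest of your plan (verify the fundamental-group identity on generators, then get the tree identity from equivariance plus a check on orbit representatives) is reasonable; indeed the paper does not carry out that verification either, but defers it to \cite[Section 4]{Bass1993}.
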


For details, and full proofs, see~\cite[Section 4]{Bass1993}. Here we give sufficient details to explain how the induced morphism is constructed.
First, since $f$ was $\psi$-equivariant it induces a graph map $\varphi$ on the quotients $S/H \to T/G$: this is our map between the underlying graphs. Let $S^v$, $S^e$, $T^v$ and $T^e$ be the subtrees of $S$ and $T$ used in Definition~\ref{def:quotient_gog}, and let $h_u$ and $g_v$ be the elements given there which bring vertices of $S^e$ and $T^e$ into $S^v$ and $T^v$ respectively.

For vertices $v$ in $S^v$, choose a $k_v$ in $G$, so that $f(v)k_v$ is in $T^v$; similarly for an edge $e$ let $k_e$ be an element of $G$ with $f(e)k_e$ in $T^e$, and $k_e=k_{\overline{e}}$. Since $f$ is $\psi$-equivariant, $\psi$ takes stabilisers to stabilisers, though not necessarily of the preferred representative of each orbit. Define $\phi_v:H_v \to G_{f(v)k_v}$ by $s \mapsto k_v^{-1}\psi(s)k_v$.

To define a morphism also requires elements $\lambda_v$ and $\delta_e$. For an edge $e$ in $S^e$, let $v=\iota(e)$, $x=f(v)k_e$ and $y=vh_v$. Then let $\delta_e=g_x^{-1}k_e^{-1}\psi(h_v)k_y$. To see that this is indeed an element of $G_{\varphi(v)}$, observe that both $\psi(h_v)k_y$ and $k_eg_x$ act to bring $f(v)$ into $T^v$, and the vertex group $G_{\varphi(v)}$ (of $\mathcal{G}$) is defined as the stabiliser of the vertex of $T^v$ in the same orbit as $f(v)$.

We will want to let $\lambda_v=k_v^{-1}$; however to be in the right group we must first apply the isomorphism (of Theorem~\ref{thm:structure_theorem} from $G \to \pi_1(\mathcal{G},\varphi(v))$. (This can be thought of as ``reading'' the path between $v$ and $vk_v$ in $T$.)

There is usually some choice as to the subtrees used to construct the quotient graph of groups. In particular, if we arrange for $f(S^v)$ to (maximally) intersect $T_v$, we may choose several $\lambda_v$ to be $1$, simplifying the morphism and allowing choices of basepoint (in $\mathcal{H}$) so that the map on fundamental groups is ``as written'' -- meaning it does not involve a conjugation by a non-trivial $\lambda_v$.

We are most interested in studying subgroups $H$ of a group $G$ with an action on a tree $T$, so usually $\psi$ is an inclusion, and $f$ is either the inclusion $T_H \to T$ (sometimes a slightly larger $H$-invariant tree) or the identity $T \to T$. In this case, we should expect the induced morphism to have good properties, since the map on trees makes no identifications. These good properties are characterised by the morphism being a \emph{cover} or \emph{immersion}.

In the context of a graph (with no groups) a covering map corresponds to the usual topological definition, and an immersion relaxes ``locally bijective" to ``locally injective". This allows the universal cover of the immersed graph to be strictly contained in the original universal cover.

The Bass--Serre tree gives the ``universal cover'' in this world: of course it is not a true cover, since an edge may have many (even infinitely many) preimages at each vertex. Similarly, our covers and immersions might have several preimages of an edge at a vertex:% Perhaps this should not be a surprise, since the action of the fundamental group had stabilisers.

\begin{defn}[{\cite[Definition 2.6]{Bass1993}}]
\label{def:immersion,cover}
A morphism $\Phi: \mathcal{H} \to \mathcal{G}$ is an \emph{immersion} if \begin{enumerate}
\item each $\phi_v : H_v \to G_{\phi(v)}$ is injective and
\item each $\Phi_{v/e}$ is injective
\end{enumerate}
And a \emph{covering} if the second condition is replaced by
\begin{enumerate}[(2')]
\setcounter{enumi}{1}
\item each $\Phi_{v/e}$ is bijective.
\end{enumerate}
\end{defn}

Bass proves that these properties exactly characterise the situation of a subgroup acting on a subtree:

\begin{prop}[{\cite[Proposition 2.7]{Bass1993}}]
\label{prop:immersion_charac}
A morphism $\Phi$ is an immersion if and only if $\Phi_{v_0}$ (on fundamental groups), and $\tilde{\Phi}$ (on Bass--Serre trees) are injective. Furthermore, it is a covering if and only if $\Phi_{v_0}$ (on fundamental groups) is injective and $\tilde{\Phi}$ (on Bass--Serre trees) is bijective.
\end{prop}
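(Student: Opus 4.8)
The plan is to reduce both equivalences to the behaviour of $\tilde{\Phi}$ on the star of a single vertex, and then to exploit the fact that the Bass--Serre trees really are trees. The bridge between the local data of $\Phi$ and the global map $\tilde{\Phi}$ is the observation recorded just before Definition~\ref{def:immersion,cover}: the edges of the source tree incident to a vertex $H_vp$ and lying over a fixed edge $f$ of $\mathcal{G}$ are parametrised by the domain $\coprod_{e \in \varphi^{-1}(f),\,\tau(e)=v}H_v$ of $\Phi_{v/f}$, while the edges of the target tree incident to $\tilde{\Phi}(H_vp)=G_{\varphi(v)}\lambda_v\Phi_P(p)$ and lying over $f$ are parametrised by $G_{\varphi(v)}$. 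First I would make this identification precise, expanding $\Phi_P(esp)=\Phi_P(e)\Phi_P(s)\Phi_P(p)$ and tracking the elements $\delta_e$ and $\lambda_v$, so as to confirm that $\tilde{\Phi}$ restricted to these edges is exactly the map $\Phi_{v/f}$.

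Second, since edges of $T_{\mathcal{G}}$ lying over distinct edges of $\Gamma_{\mathcal{G}}$ are themselves distinct, the restriction of $\tilde{\Phi}$ to the \emph{whole} star of $H_vp$ is injective (resp.\ bijective) if and only if each $\Phi_{v/f}$, for $f$ incident to $\varphi(v)$, is injective (resp.\ bijective). As $\tilde{\Phi}$ is equivariant it suffices to test this at the vertices $H_v$, so condition~(2) is equivalent to $\tilde{\Phi}$ being locally injective and condition~(2') to $\tilde{\Phi}$ being locally bijective. I would then invoke the standard facts for trees: a locally injective graph morphism out of a tree is injective, since it carries a reduced path to a reduced path and any two distinct vertices of a tree are joined by a unique reduced path of positive length; and a locally bijective morphism into a connected graph with nonempty image is surjective, since its image is a full, hence open and closed, subtree. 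This gives $\tilde{\Phi}$ injective $\iff$ condition~(2), and $\tilde{\Phi}$ bijective $\iff$ condition~(2').

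It remains to bring in $\Phi_{v_0}$ and condition~(1). In one direction, assuming $\Phi_{v_0}$ is injective I would deduce that every $\phi_v$ is injective: given $h\in H_v$ with $\phi_v(h)=1$, choose a path $p$ from $v_0$ to $v$ in the groupoid, so that $php^{-1}$ is a loop at $v_0$; since $\Phi_v(h)=\lambda_v^{-1}\phi_v(h)\lambda_v=1$ we get $\Phi_{v_0}(php^{-1})=1$, whence $php^{-1}=1$ and therefore $h=1$. Combined with the previous paragraph this already yields both \emph{reverse} implications: if $\Phi_{v_0}$ and $\tilde{\Phi}$ are injective then conditions~(1) and~(2) hold, and if $\Phi_{v_0}$ is injective and $\tilde{\Phi}$ bijective then conditions~(1) and~(2') hold.

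For the \emph{forward} implications I must show that an immersion induces an injection on fundamental groups, and I expect this to be the main obstacle. The cleanest route is to prove that $\Phi_P$ carries a reduced loop to a reduced loop, so that a nontrivial reduced loop cannot be sent to the identity. Because the edge groups are trivial, reducedness forbids only subwords of the form $e\,1\,\overline{e}$, and the content is that injectivity of $\Phi_{v/f}$ prevents $\Phi_P$ from manufacturing a new backtrack at a vertex where $\varphi$ folds two edges together, while injectivity of $\phi_v$ prevents a nontrivial vertex contribution from being annihilated. Concretely, I would examine the image $\cdots\delta_{\overline{e_i}}\phi_{v_i}(g_i)\delta_{e_{i+1}}\cdots$ of a reduced loop and check that the middle factor is nontrivial precisely by the injectivity of the relevant $\Phi_{v/f}$; the bookkeeping of the $\delta_e$ and $\lambda_v$ is the only delicate part, and it also transparently handles the degenerate case of a single-vertex graph, where a ``loop'' is just a vertex element and injectivity on $\pi_1$ reduces to condition~(1). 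The covering case then follows verbatim, since condition~(2') implies condition~(2) and hence the same injectivity on $\pi_1$.
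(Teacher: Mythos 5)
The paper itself gives no proof of this proposition: it is quoted directly from \cite[Proposition 2.7]{Bass1993}, with the supporting theory deferred to Bass, so there is no internal proof to compare against. Your proposal is correct and is essentially Bass's own argument: you identify the local maps $\Phi_{v/f}$ with the star maps of $\tilde{\Phi}$ (the computation $\tilde{\Phi}(H_{\iota(e)}ehp) = G_{\varphi(\iota(e))}\,\varphi(e)\,\delta_{\overline{e}}\phi_v(h)\,\lambda_v\Phi_P(p)$ confirms this), invoke the standard tree facts that locally injective morphisms of trees are injective and locally bijective ones onto a connected target are bijective, and obtain injectivity on $\pi_1$ by showing an immersion carries reduced paths to reduced paths. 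The step you flag as delicate does work out, and splits into exactly the two cases your sketch anticipates: if $e_{i+1}\neq\overline{e_i}$ then the cosets $\delta_{\overline{e_i}}\phi_{v_i}(H_{v_i})$ and $\delta_{e_{i+1}}\phi_{v_i}(H_{v_i})$ are disjoint by injectivity of $\Phi_{v_i/f}$, while if $e_{i+1}=\overline{e_i}$ then $g_i\neq 1$ by reducedness and injectivity of $\phi_{v_i}$ gives $\phi_{v_i}(g_i)\neq 1$; either way no cancellation $e\,1\,\overline{e}$ can appear in the image.
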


Viewing $\Phi_{v/f}$ as a map $H_v \times \{e \in E(\mathcal{H}): \iota(e)= v, \varphi(e)=f\} \to G_{\varphi(v)}$ taking $(H_v,e) \mapsto \delta_e \phi_v(H_v)$, we have that it will be injective if and only if $\phi_v$ is, and the $\delta_e$ represent different right cosets of $G_{\varphi(v)}/\phi_v(H_v)$.

One way to construct immersions is by taking a \emph{subgroup of subgroups}: restrict to a subgraph of the underlying graph, and take subgroups of each vertex group. Then letting $\Phi$ consist of the graph and group inclusions, and all $\lambda_v$ and $\delta_e$ trivial, this is an immersion.

\section{Kurosh rank and finite index subgroups of free products}

Recall Kurosh's theorem about subgroups of free products:

\begin{thm}[\cite{KuroshSubgroup}]
Suppose $G$ is a free product $\FreeProd G_i$ (over some index set $I$) and $H$ is a subgroup of $G$. Then $H \cong (\FreeProd H_j) \ast F$ where each $H_j$ is isomorphic to an intersection $H \cap G_i^{k_i}$ of $H$ with a conjugate of some $G_i$ Further, the set $\{H_j\}$ is unique up to conjugation and reindexing, and the rank of $F$ is uniquely determined.
\end{thm}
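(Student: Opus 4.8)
The plan is to read the Kurosh decomposition directly off the action of $H$ on the Bass--Serre tree of $G$. First I would realise $G = \FreeProd_{i \in I} G_i$ as the fundamental group of a graph of groups $\mathcal{G}$ with trivial edge groups: let $\Gamma_{\mathcal{G}}$ be a star whose central vertex carries the trivial group and whose leaf $v_i$ carries $G_i$ for each $i \in I$, all edge groups being trivial. As $\Gamma_{\mathcal{G}}$ is a tree, Theorem~\ref{thm:structure_theorem} gives $\pi_1(\mathcal{G}) \cong \FreeProd_i G_i = G$, and in the resulting Bass--Serre tree $T$ every edge has trivial stabiliser, while each vertex either lies over the centre (trivial stabiliser) or over some $v_i$ (stabiliser a conjugate $G_i^{k}$).

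For existence, restrict the action to $H \le G$ and form the quotient graph of groups $\mathcal{H}$ of $H \curvearrowright T$ as in Definition~\ref{def:quotient_gog}; then $H \cong \pi_1(\mathcal{H})$. Since edge stabilisers in $T$ are trivial, so are all edge groups of $\mathcal{H}$, and its vertex groups are the stabilisers $\operatorname{Stab}_H(\tilde v)$, each of which is either trivial or an intersection $H \cap G_i^{k}$. The convenient feature of trivial edge groups is that the fundamental group of such a graph of groups is just the free product of its vertex groups with the free group $\pi_1(\Gamma_{\mathcal{H}})$: choosing a spanning tree of $\Gamma_{\mathcal{H}}$, the tree edges collapse and each remaining edge contributes a free generator subject to no relation. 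Discarding the trivial vertex groups yields $H \cong \bigl(\FreeProd_j H_j\bigr) \ast F$ with the $H_j$ the nontrivial intersections $H \cap G_i^{k_i}$ and $\rank F$ the first Betti number of $\Gamma_{\mathcal{H}}$. A small point keeps this honest: any vertex with nontrivial $H$-stabiliser already lies on the minimal invariant subtree, for otherwise the first edge of the geodesic towards that subtree would be fixed by the stabiliser, which is impossible as edges of $T$ have trivial stabiliser; so no factor is spurious or double-counted.

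For uniqueness I would observe that all the extracted data are invariants of the action $H \curvearrowright T$, which is itself determined by the inclusion $H \le G$ and the fixed decomposition of $G$. The $H_j$ are exactly the nontrivial vertex stabilisers, two of them being conjugate in $H$ precisely when the corresponding vertices share an $H$-orbit, so the family $\{H_j\}$ up to conjugacy and reindexing is read off the orbit structure and is independent of the choices in Definition~\ref{def:quotient_gog}; for the free part I would identify $\rank F$ as the Kurosh rank of $H$ minus the number of factors and invoke the invariance of Kurosh rank (computed from the action via Lemma~\ref{lem:counting_orbits} and the surrounding development). The main obstacle is this uniqueness clause rather than existence, which is little more than the structure theorem applied to trivial edge groups: the statement concerns \emph{any} decomposition of the given shape, so one must check that a factor conjugate into some $G_i$ is forced to be elliptic in $T$ and hence recovered as a vertex stabiliser, and that the free rank cannot vary --- which is exactly the work done by the well-definedness of Kurosh rank.
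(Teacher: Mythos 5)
A preliminary remark on comparison: the paper never proves this theorem --- it is quoted from \cite{KuroshSubgroup} only to motivate the definition of Kurosh rank --- so your proposal stands or falls on its own. The existence half stands: realising $G$ as a graph of groups with trivial edge groups, letting $H$ act on the Bass--Serre tree $T$, forming the quotient graph of groups as in Definition~\ref{def:quotient_gog}, and invoking Theorem~\ref{thm:structure_theorem} together with the presentation of the fundamental group of a graph of groups with trivial edge groups (spanning tree edges collapse, remaining edges give free generators) is the standard modern proof, and it meshes perfectly with the paper's framework. Your side remark that a nontrivial $H$-stabiliser cannot fix a vertex off $T_H$ (it would have to fix the first edge of the geodesic to $T_H$) is also correct.

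The uniqueness half has a genuine gap, in two respects. First, the tool you invoke does not do the job: Proposition~\ref{prop:subtree_invariance_kr} says that $\kappa$ is unchanged when one $H$-invariant subtree of the \emph{fixed} $G$-tree $T$ is replaced by another. An arbitrary decomposition $H \cong (\FreeProd_j H_j') \ast F'$ of the prescribed shape is an abstract splitting with its own Bass--Serre tree $S$, and nothing you wrote compares $S$ with $T$; ``well-definedness of Kurosh rank'' in the paper's sense never looks at two different trees for $H$. Second, and more seriously, the uniqueness clause as literally stated cannot be proved because it is false without an extra hypothesis. Take $G = \langle a \rangle \ast \langle b \rangle$ and $H = G$: then $H = (H \cap \langle a \rangle) \ast (H \cap \langle b \rangle)$ with trivial free part, but also $H = (H \cap \langle a \rangle) \ast F$ with $F = \langle b \rangle$ free of rank $1$. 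Both decompositions have every factor equal to an intersection of $H$ with a conjugate of a free factor, yet the factor sets (even up to conjugation and reindexing) and the free ranks differ. The classical theorem rules this out by demanding that the free part $F$ meet every conjugate of every $G_i$ trivially --- a property your tree-constructed $F$ does enjoy, since its nontrivial elements are not elliptic in $T$, but which you never state. So a correct proof must (i) sharpen the statement by adding that hypothesis on $F$, and (ii) actually carry out the comparison of two such decompositions, for instance via an equivariant map between their Bass--Serre trees; neither step is present in your sketch.
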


The idea of Kurosh rank is inspired by this theorem, and aims to measure the ``complexity'' of such a subgroup in terms of its free factors. Here we take the approach of~\cite{antolin2014kurosh} of defining it with respect to an action on a tree:

\begin{defn}
\label{def:Kurosh_rank}
For a group $G$ and a $G$-tree $T$ with trivial edge stabilisers, the \emph{Kurosh rank} (relative to $T$) of a subgroup $H$ is \[\kappa_{T}(H)=\rank(H\bs T) + |\{Hv \in H\bs T: H_v \neq 1\}| \] where $\rank(H \bs T)$ is the number of edges outside a maximal tree.

The reduced Kurosh rank of a subgroup is $\kr_T(H) = \max\{\rrank{\kappa}_T(H)-1,0\}$.
\end{defn}

Note that $\rank(H \bs T)$ is also the rank of the fundamental group of the graph $H\bs T$. In particular, in the case of a free group acting freely on a tree, this reduces to the usual definition of (reduced) rank. 

\begin{prop}[{\cite[Proposition 2.3]{antolin2014kurosh}}]
\label{prop:subtree_invariance_kr} Let $H$ be a subgroup of $G$, and $T$ a $G$-tree with trivial edge stabilisers. Then
\begin{enumerate}
\item The Kurosh rank with respect to any $H$-invariant subtree $T'$ of $T$, $\kappa_{T'}(H)$ is equal to the Kurosh rank with respect to the minimal $H$-invariant subtree $T_H$, $\kappa_{T_H}(H)$.
\item The Kurosh rank $\kappa_{T}(H)$ is finite if and only if the quotient $H\bs T_H$ is finite.
\end{enumerate}
\end{prop}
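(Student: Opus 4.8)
The plan is to reduce everything to the minimal subtree, exploiting that $T_H$ is contained in every $H$-invariant subtree, so that $T_H \subseteq T' \subseteq T$. For part (1) I would compare $\kappa_{T'}(H)$ with $\kappa_{T_H}(H)$ one term at a time. The key observation is that, because the edge stabilisers are trivial, every vertex of $T'$ lying outside $T_H$ has trivial $H$-stabiliser: if some $h \in H_w$ with $h \neq 1$ fixed such a $w$, then since $h$ preserves $T_H$ and nearest-point projection onto $T_H$ is $H$-equivariant, $h$ would fix the projection $p$ of $w$, and hence fix the geodesic from $w$ to $p$ pointwise; as $w \notin T_H \ni p$ this geodesic contains an edge, contradicting triviality of edge stabilisers. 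Consequently $T'$ and $T_H$ have exactly the same vertex orbits with nontrivial stabiliser, so the second summand of $\kappa$ is unchanged.

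For the first summand I would note that the forest $T' \smallsetminus T_H$ is collapsed onto $T_H$ by an $H$-equivariant deformation retraction (pushing each hanging tree to its attaching vertex), which descends to a deformation retraction $H\bs T' \to H\bs T_H$. Since $\rank$ is the first Betti number, a homotopy invariant, this gives $\rank(H\bs T') = \rank(H\bs T_H)$. Combining the two summands yields $\kappa_{T'}(H)=\kappa_{T_H}(H)$, proving (1).

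Part (2) then follows by taking $T'=T$, so that $\kappa_T(H)=\kappa_{T_H}(H)$, and it suffices to show $\kappa_{T_H}(H)<\infty$ if and only if $H\bs T_H$ is finite. One direction is immediate, since a finite graph has finite rank and finitely many vertices. For the converse I would argue from minimality via two facts. First, every valence-$1$ vertex of $T_H$ has nontrivial stabiliser, since otherwise the $H$-orbit of such a leaf, together with its incident edge, could be deleted to give a strictly smaller $H$-invariant subtree. Second, $T_H$ admits no ``hanging ray'': a ray of valence-$2$, trivial-stabiliser vertices in $H\bs T_H$ would lift to such a ray in $T_H$, whose orbit could likewise be pruned to produce a proper invariant subtree. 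Granting these, suppose $H\bs T_H$ were infinite; as $\rank(H\bs T_H)<\infty$, the union of all cycles is finite (every cycle lies in the finite support of a basis of the finite-dimensional cycle space), and $H\bs T_H$ is this finite core with a forest attached. An infinite such forest must contain either an infinite ray, excluded by the no-hanging-ray fact, or infinitely many leaves, which would force infinitely many vertex orbits of nontrivial stabiliser and hence $\kappa_{T_H}(H)=\infty$. Either way we contradict finiteness of $\kappa$, so $H\bs T_H$ is finite.

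The routine parts are the bookkeeping in (1) and the easy direction of (2). The hard part will be the converse in (2): extracting finiteness (i.e.\ cocompactness) of the quotient from finiteness of $\kappa$. The crux is the joint use of minimality and trivial edge stabilisers to rule out, on the one hand, leaves of trivial stabiliser, and on the other, infinite valence-$2$ rays — establishing cleanly that a minimal subtree with finite Kurosh rank has no prunable pieces left is where I expect the main care to be needed.
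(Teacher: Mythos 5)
The paper does not actually prove this proposition---it is quoted from Antol\'in--Martino--Schwabrow---so your argument can only be judged on its own terms. Part (1) is correct: the projection argument showing every vertex of $T'\smallsetminus T_H$ has trivial stabiliser, and the equivariant deformation retraction $T'\to T_H$ descending to a homotopy equivalence of quotients, together give $\kappa_{T'}(H)=\kappa_{T_H}(H)$. The easy direction of (2), and the reduction of (2) to the minimal tree via (1), are also fine, as is your ``Fact A'' that leaves of $T_H$ can be pruned.

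The gap is in the converse of (2), at the step ``an infinite such forest must contain either an infinite ray, excluded by the no-hanging-ray fact, or infinitely many leaves.'' Your Fact B only excludes rays all of whose vertices have valence $2$ (and trivial stabiliser): the valence-$2$ hypothesis is what makes the lifted ray's orbit deletable, since nothing else is attached to it. But the ray produced by your dichotomy need not be of this form, because the hanging forest can a priori branch forever. A quotient consisting of the finite core with an infinite binary tree attached is consistent with everything you have established to that point: it has first Betti number $0$, it can have no leaves at all, and all its vertices can have trivial stabiliser, yet it contains no valence-$2$ ray---so neither horn of your dichotomy, as proved, rules it out. Closing the hole requires a stronger pruning lemma: if $e$ is a bridge of $H\backslash T_H$ such that every vertex of the component $Z$ beyond $e$ has trivial stabiliser and $Z$ meets no cycle, then for a lift $\tilde{e}$ the far-side component $\tilde{Z}$ of $T_H-\tilde{e}$ has image contained in $Z$, whence deleting the $H$-orbit of $\tilde{Z}\cup\tilde{e}$ gives a proper, nonempty, invariant subtree, contradicting minimality. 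That containment is itself a genuine step (it fails without the stabiliser hypothesis): take the nearest lift of $e$ inside $\tilde{Z}$, if any; the geodesic to it projects to a closed walk inside the tree $Z$; closed walks in trees must backtrack; and a backtrack downstairs forces a nontrivial vertex stabiliser upstairs, contradicting triviality of stabilisers over $Z$. With that lemma applied to any edge beyond the finite subgraph spanned by the core, the leaves, and the nontrivially stabilised vertices, your proof closes up; as written, the branching case is a genuine hole.
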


If $T$ is clear from context then we will often just write $\kr$, without subscripts, even if we are reasoning with some $H$-subtree.

By Grushko's theorem~\cite{GrushkoDecomp}, the Kurosh rank of a subgroup is bounded above by its true rank. (Each vertex group adds $1$ to the Kurosh rank while adding at least $1$ to the true rank of the subgroup.)

We need to calculate the index of a subgroup from its covering graph of groups, for which we use the following lemma:

\begin{lem}
\label{lem:counting_orbits}
Suppose a group $G$ acts transitively on a set $X$, and $H$ is a subgroup of $G$. Let $X_0$ be a set of orbit representatives for the action of $H$ on $X$. Then \[[G:H] = \sum_{x \in X_0} [G_x:H_x].\] In particular, $[G:H]$ is finite if and only if $X_0$ is finite as is every index $[G_x:H_x]$.% is%(If $X_0$ or any $[G_x:H_x]$ is infinite, so is $[G:H]$.)
\end{lem}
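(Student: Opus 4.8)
The plan is to reduce the statement to the orbit--stabiliser theorem together with a count of the cosets of $H$ organised by $H$-orbit. First I would fix a basepoint $x_0 \in X$ and write $K = G_{x_0}$ for its stabiliser. Since $G$ acts transitively and on the right, the orbit map $g \mapsto x_0 g$ induces a $G$-equivariant bijection $K\backslash G \to X$ sending $Kg \mapsto x_0 g$. Transporting the $H$-action across this bijection turns it into right multiplication by $H$ on $K\backslash G$, so the $H$-orbits on $X$ are precisely the double cosets in $K\backslash G / H$. In particular, choosing the orbit transversal $X_0$ amounts to choosing double-coset representatives $g_y$, with $y = x_0 g_y$ for each $y \in X_0$.

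Next I would count the left cosets of $H$ (there are $[G:H]$ of them) by means of the map $\Theta: G/H \to H\backslash X$ sending $gH$ to the $H$-orbit of $x_0 g$; this is well defined because replacing $g$ by $gh$ only moves $x_0 g$ within its $H$-orbit, and it is surjective by transitivity. The fibre of $\Theta$ over the orbit of $y = x_0 g_y$ consists of exactly those cosets $gH$ with $g \in K g_y H$, i.e.\ the left cosets of $H$ contained in the double coset $K g_y H$. A direct check shows there are $[K : K \cap g_y H g_y^{-1}]$ of these, and conjugating by $g_y$ (an inner automorphism of $G$, hence index-preserving) rewrites this as $[g_y^{-1}Kg_y : g_y^{-1}Kg_y \cap H]$. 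Since $G_y = g_y^{-1} K g_y$ and $H_y = H \cap G_y$, this fibre has size $[G_y : H_y]$, and summing over $X_0$ yields
\[ [G:H] = \sum_{y \in X_0} [G_y : H_y]. \]

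Finally, the finiteness clause is immediate once the formula is in hand: every summand $[G_x:H_x]$ is at least $1$, so the sum is finite if and only if it has finitely many terms and each term is finite, which is exactly the assertion that $X_0$ and every $[G_x:H_x]$ are finite. The only real care needed is bookkeeping with the right action---keeping straight that $G_y = g_y^{-1}Kg_y$ and that the $H$-orbits match double cosets---together with the one genuinely substantive step, namely the identification of the number of $H$-cosets inside a double coset $Kg_yH$ with $[G_y:H_y]$ via the conjugation isomorphism; everything else is formal. The argument is insensitive to whether the cardinalities involved are finite, so the displayed identity and its finiteness consequence both hold as statements about (possibly infinite) cardinals.
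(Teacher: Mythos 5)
Your proof is correct, including the bookkeeping with the right action and the cardinal arithmetic, but it is organised along a genuinely different route from the paper's. You transport the action to $K\backslash G$ via orbit--stabiliser, fibre the coset space $G/H$ over the orbit set $H\backslash X$, and identify each fibre with the set of $H$-cosets inside a double coset $Kg_yH$, counted by the standard formula $[K:K\cap g_yHg_y^{-1}]$ and then conjugated back to $[G_y:H_y]$. The paper instead constructs, by hand, an explicit (and avowedly non-canonical) bijection $\coprod_{x\in X_0} H_x\backslash G_x \to H\backslash G$, sending $H_xg \mapsto Hgg[x]$ for chosen elements $g[x]$ carrying the basepoint to $x$, and verifies well-definedness, injectivity and surjectivity directly. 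The two arguments contain the same combinatorics in different packaging: your ``direct check'' of the number of $H$-cosets in a double coset is precisely the paper's injectivity/well-definedness step, conjugated so that everything happens at the basepoint stabiliser, and the paper's surjectivity step (writing an arbitrary $g$ as $h\hat{g}g[x]$) is exactly your double-coset decomposition of $G$. What your route buys is reliance on standard, quotable facts (orbit--stabiliser, the double-coset index formula) and a transparent reason why the identity holds as a statement about possibly infinite cardinals; what the paper's route buys is self-containedness and a bijection phrased directly in terms of the stabilisers $G_x$, $H_x$ of each orbit representative rather than conjugates of a single basepoint stabiliser, which is the form in which the lemma is then applied to count preimages of edges and vertices in covering graphs of groups.
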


\begin{proof}
We will exhibit a bijection (though it is very far from canonical) from $\coprod_{x \in X_0} H_x \bs G_x$ to $H \bs G$. To define this, fix a base point $x_0 \in X$, %a set of orbit representatives for the action of $H$ on $X$, 
and for every element $x$ of $X$ a $g[x] \in G$ such that $g[x]x_0=x$.

Now define a function sending a coset $H_xg$ to the coset $Hgg[x]$. This is well defined in the sense that if $g_1$ and $g_2$ are in the same $H_x$-coset of $G_x$, then $(g_1g[x])(g_2g[x])^{-1}=g_1g_2^{-1}$ which is an element of $H_x$ and in particular $H$. To see it is injective, suppose there are orbit representatives $x,y$ and elements $g_1 \in G_x, g_2 \in G_y$ such that $Hg_1g[x]=Hg_2g[y]$. That is, $g_1g[x]g[y]^{-1}g_2^{-1}$ is an element of $H$. But this element moves $y$ to $x$, and so these must represent the same orbit. So we may assume $x=y$, and this reduces to considering $g_1g_2^{-1}$. This is an element of $H$, and also of $G_x$, and therefore of $H_x$, so $g_1$ and $g_2$ represent the same $H_x$-coset.

To see surjectivity, we will write an arbitrary $g \in G$ as a product $h\hat{g}g[x]$, where $\hat{g} \in G_x$. To do this, let $y$ be the element $gx_0$, and observe that we may write $y$ as $hx=hg[x]x_0$ for some orbit representative $x$. The element $hg[x]$ is in the same $G_y$-coset as $g$, so we may write $g=\tilde{g}hg[x]$ with $\tilde{g}\in G_y$. But recall that $G_y=hG_xh^{-1}$, so we have that $\tilde{g}=h\hat{g}h^{-1}$, with $\hat{g}$ an element of $G_x$. In particular, we have that $g=(h\hat{g}h^{-1})hg[x]=h\hat{g}g[x]$, as required.
\end{proof}

Given any (not necessarily transitive) action, we can calculate the index of a subgroup by restricting our attention to one orbit and using Lemma~\ref{lem:counting_orbits}. In the context of an action on a tree, this would mean looking at the orbit of a single edge or vertex and considering the edge or vertex groups that arise in the covering graph of groups. In particular, for a free product, counting the occurences of any edge in the same $G$-orbit will give the the index. If the original action gave rise to a finite quotient graph, so will the action of a finite index subgroup. In particular, if the original action was minimal, the minimal invariant subtree for the subgroup will be the whole tree again.

This gives a criterion for a subgroup of a graph of groups to be finite index: this happens if and only if the covering graph of groups has finite underlying graph, and every vertex (or indeed edge) group is finite index in the relevant vertex group of the original graph of groups.

Note that this lemma provides a link between the index and Kurosh rank of a finite index subgroup of a free product, though this is complicated somewhat by the possibility that a vertex has non-trivial stabiliser under $G$ but is trivially stabilised by the subgroup $H$. However, if we reduce to the case that the vertex groups are infinite these complications disappear. (Restricting to normal subgroups provides a different simplification.)

\begin{cor}
\label{cor:index_formula_big}
Suppose a group $G$ is expressed as a free product of infinite groups, and $H$ is a finite index subgroup of $G$. Then \hbox{$\kr(H)=[G:H]\kr(G)$}.
\end{cor}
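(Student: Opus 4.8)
The plan is to compute both sides directly from the quotient graph of groups and to use Lemma~\ref{lem:counting_orbits} to control how each $G$-orbit of edges and vertices breaks up under the finite-index subgroup $H$. Fix the free product decomposition $G = G_1 \ast \cdots \ast G_n$ and let $T$ be its Bass--Serre tree, taken in the standard star form so that the quotient $G\bs T$ has one central vertex with trivial stabiliser, $n$ leaves whose stabilisers are (conjugates of) the factors $G_1,\dots,G_n$, and all edge stabilisers trivial. This quotient is a tree, so $\rank(G\bs T)=0$, and it carries exactly $n$ nontrivial vertex groups; by Definition~\ref{def:Kurosh_rank} this gives $\kappa_T(G)=n$ and hence $\kr(G)=n-1$. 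Since $[G:H]$ is finite, $H\bs T$ is a finite connected graph, so I may use the Euler-characteristic expression $\rank(H\bs T)=E_H-V_H+1$, where $E_H$ and $V_H$ denote its numbers of geometric edges and vertices.

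Next I would count $E_H$, $V_H$, and the number $N_H$ of vertices of $H\bs T$ carrying a nontrivial $H$-stabiliser, orbit by orbit. Each of the $n$ edge-orbits has trivial $G$-stabiliser, so Lemma~\ref{lem:counting_orbits} (applied with all point stabilisers trivial) shows it splits into exactly $[G:H]$ orbits under $H$, giving $E_H=n[G:H]$; the same applied to the central vertex-orbit yields $[G:H]$ vertices, all with trivial stabiliser. For the orbit of the leaf $v_i$, let $m_i$ be the number of $H$-orbits it contains; Lemma~\ref{lem:counting_orbits} gives $[G:H]=\sum_j[G_{x_j}:H_{x_j}]$, so each index $[G_{x_j}:H_{x_j}]$ is finite. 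This is exactly where the hypothesis that the factors are infinite enters: each $G_{x_j}$ is a conjugate of the infinite group $G_i$ and $H_{x_j}$ has finite index in it, so every $H_{x_j}$ is infinite and in particular nontrivial. Hence all $\sum_i m_i$ of these vertices contribute to $N_H$, giving $V_H=[G:H]+\sum_i m_i$ and $N_H=\sum_i m_i$.

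Finally I would substitute into $\kappa_T(H)=\rank(H\bs T)+N_H=(E_H-V_H+1)+N_H$. The two copies of $\sum_i m_i$ cancel, leaving $\kappa_T(H)=(n-1)[G:H]+1$, and therefore $\kr(H)=(n-1)[G:H]=[G:H]\kr(G)$. The substantive point is precisely this cancellation: it works only because infiniteness of the factors forces \emph{every} non-central vertex orbit to retain a nontrivial stabiliser under $H$, so that each extra vertex created by passing to $H$ is matched against one of the contributions to the vertex-group count rather than inflating $\rank(H\bs T)$. Without this, some $H_{x_j}$ could be trivial, the terms $V_H$ and $N_H$ would no longer agree on the $\sum_i m_i$ part, and the clean multiplicative formula would break down; this is the complication flagged in the paragraph preceding the statement, and taming it is the only real obstacle in the argument.
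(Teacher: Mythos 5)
Your proof is correct and is essentially the paper's argument: both compute Kurosh rank from the quotient graphs of groups via Lemma~\ref{lem:counting_orbits}, with the infiniteness hypothesis used in exactly the same way (a vertex whose stabiliser becomes trivial under $H$ would force the corresponding $G$-stabiliser to be finite, hence trivial). The only cosmetic difference is that you specialise to the star-shaped tree and carry out the Euler-characteristic cancellation explicitly, whereas the paper works with an arbitrary trivial-edge-group presentation and absorbs that cancellation into the identity $\kr(H)=|E\mathcal{H}|-|\{v\in V\mathcal{H}:H_v=1\}|$.
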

\begin{proof}
Represent $G$ as the fundamental group of a graph of groups $\G$ with trivial edge stabilisers, and let $\mathcal{H}$ be the covering graph of groups corresponding to the action of $H$ on the Bass--Serre tree of $\G$. Write $\kr(H)=|E\mathcal{H}|-|\{v\in V\mathcal{H}:H_v=1\}|$. If a vertex has trivial stabiliser under $H$, it will also have trivial stabiliser under $G$, since otherwise $G_v$ would be a finite subgroup. So use Lemma~\ref{lem:counting_orbits} to rewrite as follows: \begin{align*}
\kr(H)&=|E\mathcal{H}|-|\{v\in V\mathcal{H}:H_v=1\}| \\
&= [G:H]|E\mathcal{G}|-[G:H]|\{v\in V\mathcal{G}:G_v=1\}| \\
&=[G:H]\kr(G). \qedhere
\end{align*}%Here there cannot be any vertices trivially stabilised by $N$, since this would give that $G_x$ was a finite subgroup of $G$ (which would necessarily have torsion). So the set $P$ is empty, and we have the simpler formula.
\end{proof}

For a free group acting either freely or as a free product of infinite cyclic groups, the Kurosh rank and the true rank agree, so this recovers Schreier's formula for free groups: \[\rank(H)-1=[F:H](\rank(F)-1).\]

\section{Subgroup separability for free products}

In this section we provide the Bass--Serre theoretic proof of the theorem that free products of subgroup separable groups are themselves subgroup separable. The proof is in three steps, dealing in turn with ``completing'' a graph of groups immersion, enlarging the vertex groups, and then doing this in general for all finitely generated subgroups of a free product.

Stallings' proof (in the free group case) begins with a (finite) labelled graph where the labelling provides an immersion to a rose. The lifts of any edge in a cover would provide a bijection from the vertex set to itself, and the lifts present in the immersion give a partial bijection. Thus any way of completing the partial bijection to a full bijection is admissible in a cover. (There are only finitely many options, and all of them will work.) The condition on a cover can be checked one edge of the rose at a time, so we may do this separately to each edge and the final graph will be a covering graph.

There are several obstacles in extending this to graphs of groups. First, the graph (of groups) we are covering may have more than one vertex. In the free group case this is easily surmountable, although a little care is needed: we must first make sure that all vertices have equal numbers of lifts. We can achieve this by adding isolated vertices to the immersed graph, to take each vertex up to the maximum. Notice that after adding edges (which now correspond to bijections between different kinds of vertex, in general), every vertex will be connected to at least one vertex of every kind. Since at least one vertex had no extra preimages added, all of these are connected. Thus the constructed graph will have only one component.

Another obstacle is that the notion of ``local injectivity'' is different, and we will need to assign $\delta_e$ values to any new edges in a way which preserves the immersion. Finally, we will have to alter the vertex groups so that all of them are finite index, otherwise (by Lemma~\ref{lem:counting_orbits}) the subgroup cannot be.% We first present a result enlarging the graph of groups without altering the vertex groups.

We deal in turn with the obstacles presented in generalising Stallings' proof. First, Theorem~\ref{thm:embiggening_the_graph} gives a way to complete an immersion to a cover when the vertex groups have finite index image in the target graph of groups; the Theorem~\ref{thm:embiggening_the_vertex_groups} gives sufficient conditions for enlarging the vertex groups so this condition is met. Finally we prove the Burns--Romanovskii theorem by combining these to produce a covering graph of groups containing a given finite index subgroup but excluding any given element outside it.

\begin{thm}
\label{thm:embiggening_the_graph}
Suppose $G$ is a free product, expressed as the fundamental group of a graph of groups $\mathcal{G}$ where every edge group is trivial. Suppose $H$ is a subgroup of $G$, corresponding to an immersion $\Phi: \mathcal{H} \to \mathcal{G}$, where $\Gamma_{\mathcal{H}}$ is finite and each $H_v$ is mapped to a finite index subgroup of $G_{\varphi(v)}$.

Then there is a finite index subgroup $M$ of $G$ containing $H$ as a free factor.
\end{thm}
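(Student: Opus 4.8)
The plan is to complete the given immersion $\Phi: \mathcal{H} \to \mathcal{G}$ to a \emph{covering} $\Phi': \mathcal{H}' \to \mathcal{G}$, following the strategy of Stallings but adapted to the graph-of-groups setting. Once this is achieved, Proposition~\ref{prop:immersion_charac} tells us that the induced map $\tilde{\Phi}'$ on Bass--Serre trees is bijective, so that $\mathcal{H}'$ is the quotient graph of groups for the action of some subgroup $M$ of $G$ on the whole Bass--Serre tree $T$. Since $\mathcal{H}$ embeds into $\mathcal{H}'$ as a subgraph of groups (with the same vertex groups on the shared vertices), $H$ will be visibly a free factor of $M$: the subgraph $\Gamma_{\mathcal{H}}$ together with its vertex groups contributes exactly the free product of the $H_v$ with a free group, and the remaining edges and vertices of $\mathcal{H}'$ contribute additional free factors. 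Finally, Lemma~\ref{lem:counting_orbits} (in the form recorded after its proof) shows $M$ is finite index in $G$, because $\Gamma_{\mathcal{H}'}$ is finite and every vertex group $H'_v = \phi_v(H_v)$ has finite index in the relevant $G_{\varphi(v)}$ by hypothesis.

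\textbf{Building the cover.} First I would address the obstacle that different vertices of $\mathcal{G}$ may receive different numbers of lifts in $\mathcal{H}$. For each vertex $u$ of $\Gamma_{\mathcal{G}}$, count the fibre $\varphi^{-1}(u)$; let $N_u$ be the maximum over $u$ of these counts, suitably normalised so that the covering condition can be met. I would add finitely many isolated vertices to $\Gamma_{\mathcal{H}}$ (each with vertex group a copy of the image $\phi_v(H_v)$, or indeed the finite-index subgroup already prescribed) so that every vertex of $\mathcal{G}$ has the same number of preimages. As noted in the discussion preceding the theorem, because at least one vertex needed no new preimages, the final graph stays connected once edges are added.

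\textbf{Completing the local bijections.} The heart of the argument, and the step I expect to be the main obstacle, is assigning the edges and their $\delta_e$ data so that each local map $\Phi_{v/f}$ becomes \emph{bijective} rather than merely injective. Fix an edge $f$ of $\Gamma_{\mathcal{G}}$ with $\tau(f) = u$. Viewing $\Phi_{v/f}$ through the coset description given after Proposition~\ref{prop:immersion_charac}, the existing edges over $f$ landing at a preimage $v$ of $u$ supply $\delta_e$'s representing distinct right cosets in $G_u / \phi_v(H_v)$, and injectivity says these cosets are distinct. Because $[G_u : \phi_v(H_v)]$ is finite by hypothesis, only finitely many cosets remain unrepresented at each such $v$; the task is to add new edges (each joining a preimage of $\iota(f)$ to a preimage of $u = \tau(f)$) carrying $\delta_e$-values that realise exactly the missing cosets, and to do this compatibly at both endpoints $\iota(f)$ and $\tau(f)$ of every new edge. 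This is a bipartite matching between the missing cosets at the $\iota$-side and those at the $\tau$-side; the equalisation of fibre sizes in the previous step guarantees the two sides have matching total cardinality, so such a completion exists, and it can be carried out one edge-pair $\{f,\overline{f}\}$ of $\mathcal{G}$ at a time. The care required is purely bookkeeping: each added edge must be given a $\delta_e$ and a $\delta_{\overline{e}}$ so that both $\Phi_{\iota(e)/f}$ and $\Phi_{\tau(e)/\overline{f}}$ gain exactly one new coset, and since the vertex groups are unchanged this does not disturb injectivity of any $\phi_v$.

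\textbf{Conclusion.} After processing every edge of $\mathcal{G}$, each $\Phi'_{v/f}$ is bijective, so $\Phi'$ is a covering by Definition~\ref{def:immersion,cover}. Proposition~\ref{prop:immersion_charac} then identifies $\mathcal{H}'$ with the quotient graph of groups of a subgroup $M \le G$ acting on all of $T$; finiteness of $\Gamma_{\mathcal{H}'}$ and of each $[G_{\varphi(v)} : \phi_v(H_v)]$ gives $[G:M] < \infty$ via Lemma~\ref{lem:counting_orbits}, and the inclusion of $\mathcal{H}$ as a subgraph of groups exhibits $H$ as a free factor of $M$, completing the proof.
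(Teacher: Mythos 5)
Your overall strategy is the paper's: equalise the fibres over the vertices of $\mathcal{G}$, then complete the partial local coset bijections one edge pair of $\mathcal{G}$ at a time, and read off the index from Lemma~\ref{lem:counting_orbits}. But there is a genuine gap at the equalisation step, and it is exactly the point where the graph-of-groups setting differs from Stallings. You equalise the \emph{number of preimage vertices} $|\varphi^{-1}(u)|$ over each vertex $u$, and you give the new isolated vertices copies of the groups $\phi_v(H_v)$. The quantity that must be equalised is instead the total coset count
\[ d_u \;=\; \sum_{v \in \varphi^{-1}(u)} [G_u : \phi_v(H_v)], \]
because in a cover the edges lying over a fixed edge $f$ of $\mathcal{G}$ are in bijection with $\coprod_{v\in\varphi^{-1}(\iota(f))} G_{\iota(f)}/\phi_v(H_v)$ and simultaneously with the corresponding disjoint union over $\varphi^{-1}(\tau(f))$; this weighted sum is also what Lemma~\ref{lem:counting_orbits} computes, both sides being equal to the index of the subgroup. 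Equal fibre sizes do not give equal coset counts, so your claim that ``the equalisation of fibre sizes in the previous step guarantees the two sides have matching total cardinality'' is false, and the bipartite matching you invoke need not exist. Concretely: let $\Gamma_{\mathcal{G}}$ be a single edge pair $\{f,\overline{f}\}$ joining vertices $u$ and $y$, and let $\mathcal{H}$ have one vertex $v$ over $u$ with $[G_u:\phi_v(H_v)]=2$, one vertex $x$ over $y$ with $[G_y:\phi_x(H_x)]=3$, and a single edge over $f$ joining them. This is an immersion satisfying your hypotheses, and the fibres already have equal size, so your construction adds no vertices; but then one side of your matching has $2$ elements and the other has $3$, and no completion to a cover on this vertex set exists.

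The paper's fix is twofold: take $d = \max_u d_u$ with $d_u$ the weighted sum above, and give each newly added isolated vertex over $u$ the \emph{full} group $G_u$, so that each new vertex contributes exactly one coset and $d_u$ can be raised to $d$ in increments of $1$. (Your choice of new vertex groups --- copies of some $\phi_v(H_v)$ --- increments the coset count by the index of that copy, so even had you aimed at the right invariant you could overshoot the common target.) With that correction the rest of your argument goes through essentially as in the paper: the injectivity of the existing $\Phi_{v/f}$ gives a partial bijection of cosets which can be completed edge pair by edge pair, connectivity follows because some vertex of $\mathcal{G}$ needed no added preimages, $H$ sits inside $M$ as a subgraph of subgroups and hence as a free factor, and Lemma~\ref{lem:counting_orbits} gives $[G:M]=d<\infty$.
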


\begin{proof}% \textcolour{red}{gah notation - $v,u,x,y$ are not very consistently used...}
By Lemma~\ref{lem:counting_orbits}, in a cover the index of the subgroup can be calculated by looking at the preimages of any edge or vertex and their stabilisers. So we need to ensure these are equal. To this end, for each vertex $u$ of $\mathcal{G}$ calculate 
\[ d_u= \sum_{v \in \phi^{-1}(u)} [G_u:H_v].\]
Each $d_u$ is finite: the sum is over finitely many vertices (since $\mathcal{H}$ is finite), and each $[G_u:H_v]$ is finite by assumption. Since $\mathcal{G}$ is also finite, there is a maximum among the $d_u$, say $d$. This will be the degree of the cover. For each vertex in $\mathcal{G}$ add $d-d_u$ isolated vertices to $\mathcal{H}$, declaring them to be in the pre-image of $u$, and assigning each the full subgroup $G_u$. (Recalculating $d_u$ after doing this, all are equal to $d$.)

Though it is disconnected, we can still extend $\Phi$ to the new vertices: each $v$ is in the pre-image of some vertex $u$ of $\mathcal{G}$, and set each new $\phi_v$ to be the identity map. We now need to add edges, further extending the morphism $\Phi$ by assigning $\varphi(e)$ and $\delta_e$ as we do.% (All edge groups are trivial, so $\phi_e$ will always be the trivial map.)

We have the local maps on cosets, $\Phi_{v/f}$ and we may extend these to the new vertices (note that, where there are no edges in the pre-image of $f$ at $v$, this is a map from the empty set). These maps are all injections since we began with an immersion and maps out of the empty set must be injective. Our goal is that they should all be bijections: we will need to add more edges, choosing values for $\delta_e$ to achieve this.

For each edge $f$ of $\mathcal{G}$, there should be $d$ pre-images in $\mathcal{H}$. Consider a vertex $v$ of $\mathcal{H}$, and the pre-images $e$ of $f$ at $v$. The values $\delta_e$ form a partial system of coset representatives for $G_{\varphi(v)}/\phi_v(H_v)$, since $\Phi_{v/f}$ is injective.

Suppose $f$ has initial vertex $u$ and terminal vertex $y$. The edges in the pre-image of $f$ provide a partial bijection \[\coprod_{v \in \varphi^{-1}(u)} G_{\varphi(v)}/\phi_v(H_v) \to \coprod_{x \in \varphi^{-1}(y)} G_{\varphi(x)}/\phi_x(H_x),\] by \[\phi_{\iota(e)}(H_{\iota(e)}) \delta_e \mapsto \phi_{\tau(e)}(H_{\tau(e)})\delta_{\overline{e}}.\] 

Both these disjoint unions have cardinality $d$, so this can be completed to a bijection. Add new edges (in the pre-image of $f$) and coset representatives $\delta_e$ and $\delta_{\overline{e}}$ according to this bijection. (The choice of bijection will usually change the subgroup we construct, but not its index.)% \textcolour{red}{though maybe changing $\delta_e$ for a different representative of the same coset - which looks like the only place I've invoked AoC - doesn't change the answer??})

Let $\mathcal{M}$ be the graph of groups constructed by repeating this for each edge in $\mathcal{G}$, and $\Phi$ be the extension of the original morphism to all of $\mathcal{M}$. This process added finitely many edges to $\mathcal{H}$. Every connected component of $\mathcal{M}$ contains at least one pre-image of each vertex of $\mathcal{G}$, and since at least one vertex of $\mathcal{G}$ had no pre-images added, and this means the underlying graph of $\mathcal{M}$ will be connected. Also, each $\Phi_{v/f}$ is now bijective, so the morphism $\Phi$ has been extended to a cover.

Picking a base point for $\mathcal{M}$ (in the pre-image of a chosen base point for $\mathcal{G}$) we recover a subgroup $M$ of $G$, which has index $d$ since $\mathcal{M}$ by Lemma~\ref{lem:counting_orbits}.
\end{proof}

Just as in the free group case, restricting $\mathcal{M}$ to the edges and vertices of $\mathcal{H}$ recovers $\mathcal{H}$: so we may view $H$ as a free factor of $M= \pi_1(\mathcal{M})$.

For a general subgroup $H$ of $G$, the vertex groups of $\mathcal{H}$ are not finite index subgroups of the corresponding vertex groups of $\mathcal{G}$ so the process used proving Theorem~\ref{thm:embiggening_the_graph} will not terminate -- in fact, any cover must have infinite degree by Lemma~\ref{lem:counting_orbits}, so there will be infinitely many edges in each pre-image.

So to say anything for general $H$, we must first replace each vertex group $H_v$ with a group mapping to a finite index subgroup of $G_{\varphi(v)}$. Done carelessly, this enlarging of vertex groups is likely to cause some $\delta_e$ values to represent the same coset, and we will no longer have an immersion. So care - and separability assumptions - will be needed as we do this.

\begin{thm}
\label{thm:embiggening_the_vertex_groups}
Suppose $G$ is a free product, expressed as the fundamental group of a graph of groups $\mathcal{G}$ where every edge group is trivial. Suppose $H$ is a subgroup of $G$, corresponding to an immersion $\Phi: \mathcal{H} \to \mathcal{G}$ with $\Gamma_H$ finite. If each $\phi_v(H_v)$ is separable in $G_{\varphi(v)}$, then there is a finite index subgroup $K$ of $G$ corresponding to a cover $\mathcal{K}$ that contains $\mathcal{H}$ as a subgraph of subgroups.
\end{thm}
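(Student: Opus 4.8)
The plan is to enlarge each vertex group of $\mathcal{H}$ to a finite-index subgroup of the corresponding vertex group of $\mathcal{G}$, doing this carefully enough that the result is still an immersion, and then to invoke Theorem~\ref{thm:embiggening_the_graph} to complete it to a cover. Since each $\phi_v$ is injective I would identify $H_v$ with its image $\phi_v(H_v) \leq G_{\varphi(v)}$ and regard $\phi_v$ as an inclusion; the task then becomes replacing each $\phi_v(H_v)$ by a finite-index subgroup $L_v$ with $\phi_v(H_v) \leq L_v \leq G_{\varphi(v)}$ while leaving the edge elements $\delta_e$ fixed.

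The only immersion condition at risk under such an enlargement is the injectivity of the local maps $\Phi_{v/f}$. Recall that $\Phi_{v/f}$ is injective precisely when $\phi_v$ is injective (which persists, being an inclusion) and the elements $\delta_e$, ranging over the edges $e$ with $\iota(e)=v$ and $\varphi(e)=f$, represent distinct cosets $\delta_e\phi_v(H_v)$; equivalently $\delta_{e'}^{-1}\delta_e \notin \phi_v(H_v)$ for each such pair $e \neq e'$. Enlarging $\phi_v(H_v)$ to $L_v$ preserves this injectivity exactly when none of the finitely many witness elements $\delta_{e'}^{-1}\delta_e$, which lie outside $\phi_v(H_v)$ because $\Phi$ is already an immersion, falls into $L_v$.

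This is where the separability hypothesis enters, and it is the heart of the argument. Because $\Gamma_{\mathcal{H}}$ is finite, at each vertex $v$ there are only finitely many witness elements $\delta_{e'}^{-1}\delta_e$. For each of them, separability of $\phi_v(H_v)$ in $G_{\varphi(v)}$ supplies a finite-index subgroup of $G_{\varphi(v)}$ that contains $\phi_v(H_v)$ but excludes that element; intersecting these finitely many subgroups yields a single finite-index $L_v \geq \phi_v(H_v)$ missing every witness. By construction the $\delta_e$ still represent distinct cosets modulo $L_v$, so replacing each vertex group by $L_v$ (and leaving the underlying graph, the $\lambda_v$ and the $\delta_e$ unchanged) produces a morphism $\mathcal{K}' \to \mathcal{G}$ that is again an immersion, now with every vertex group finite-index in $G_{\varphi(v)}$. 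Since $\phi_v(H_v) \leq L_v$ and the graph is unchanged, $\mathcal{H}$ sits inside $\mathcal{K}'$ as a subgraph of subgroups, the inclusion having trivial $\lambda_v$ and $\delta_e$.

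It then remains to apply Theorem~\ref{thm:embiggening_the_graph} to the immersion $\mathcal{K}' \to \mathcal{G}$: its hypotheses ($\Gamma$ finite, each vertex group finite-index in its target) are exactly what the previous step arranged. This yields a finite-index subgroup $K \leq G$ and a covering $\mathcal{K} \to \mathcal{G}$ into which $\mathcal{K}'$, and hence $\mathcal{H}$ by composing the two subgraph-of-subgroups inclusions, embeds as a subgraph of subgroups. The main obstacle throughout is the coset-collapse problem of the second paragraph; everything else is bookkeeping, and it is the finiteness of $\Gamma_{\mathcal{H}}$ that keeps the number of separability conditions finite, so that a single finite-index $L_v$ suffices at each vertex.
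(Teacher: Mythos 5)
Your proof is correct and follows essentially the same route as the paper: collect the finitely many coset-witness elements $\delta_{e'}^{-1}\delta_e$ at each vertex (the paper's sets $X_v$), use separability to find a finite-index overgroup of $\phi_v(H_v)$ avoiding them, verify the enlarged morphism is still an immersion, and finish with Theorem~\ref{thm:embiggening_the_graph}. Your explicit intersection of finitely many finite-index subgroups is a detail the paper leaves implicit, but the argument is the same.
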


\begin{proof}
Our first goal is to alter $\mathcal{H}$ and $\Phi$, so each vertex group maps to a finite index subgroup of the relevant $G_v$, while keeping $\Phi$ an immersion. In order to achieve this, we must ensure that the elements $\delta_e$ continue to represent different cosets $G_{\varphi(v)}/\phi_v(H_v)$.

For each vertex $v$ of $\mathcal{H}$ and edge $f$ with $\iota(f)=\varphi(v)$, let $X_{v/f}$ be the finite set of elements $\delta_{e_i}^{-1}\delta_{e_j}$ where $e_i$ and $e_j$ are distinct edges with $\iota(e_i)=\iota(e_j)=v$ and $\varphi(e_i)=\varphi(e_j) = f$. Let $X_v$ be the disjoint union of the $X_{v/f}$ over edges $f$ at $\varphi(v)$.

Since each $\phi_v(H_v)$ was assumed separable in $G_{\varphi(v)}$, there is a finite index subgroup of $G_{\varphi(v)}$ that contains $\phi_v(H_v)$ but no elements of $X_v$. Let $K_v$ be isomorphic to this subgroup, contain $H_v$, and extend $\phi_v$ to $K_v$ so its image is this subgroup.

This remains an immersion: the vertex maps are still injective, so it remains to check the local coset maps $\Phi_{v/f}$. This amounts to checking that for each edge $f$ at $\varphi(v)$ the elements $\delta_e$ with $e$ in the preimage of $f$ represent different right cosets of $\phi_v(K_v)$, or equivalently that $\delta_{e_i}^{-1}\delta_{e_j}$ are outside $\phi_v(K_v)$ for all pairs $e_i,e_j$ of edges.

But this is exactly what we ensured by requiring $\phi_v(K_v)$ to exclude $X_v$, so this condition is satisfied, and $\Phi$ remains an immersion.

Now we are able to apply Theorem~\ref{thm:embiggening_the_graph}: we have an immersion where the vertex groups correspond to finite index subgroups. This immersion can be completed to a cover corresponding to a finite index subgroup. Since the procedure to do this does not identify any edges or vertices, we can recover $\mathcal{H}$ (and the original immersion) by restricting to a subgraph and subgroups of the vertex groups.
\end{proof}

\begin{rem}
Note that the hypothesis that each vertex group embedding is separable was stronger than needed for the conclusion: all that was used was the fact that each $\phi_v(H_v)$ was contained in a finite index subgroup excluding $X_v$ in order to keep the cosets separate. This condition is necessary as well as sufficient, since otherwise there is no way to enlarge $H_v$ to a finite index subgroup where the $\delta_{e_i}$ represent different cosets.
\end{rem}

Now we have the tools to prove the main theorem,

\begin{main_thm*}
Suppose $G$ is a finite free product of subgroup separable groups. Then $G$ is itself subgroup separable.
\end{main_thm*}

\begin{proof}
The proof is essentially an application of Theorem~\ref{thm:embiggening_the_vertex_groups} (and therefore also of Theorem~\ref{thm:embiggening_the_graph}), but a little care is needed in the set up to be sure that we can exclude any element.

Let $\mathcal{G}$ be a finite graph of groups (with trivial edge groups) representing $G$ as $\pi_1(\mathcal{G},v_0)$, and $T$ be the Bass--Serre tree for $\mathcal{G}$. Let $v$ be the vertex in $T$ represented by $G_{v_0}$ - the ``preferred lift'' of $v_0$ to $T$.

Let $H$ be any finitely generated subgroup of $G$ and let $g$ be any element of $G$ outside of $H$. Define a subtree $T_1$ of $T$ as the smallest subtree which \begin{itemize}
\item is $H$-invariant (that is, contains $T_H$);
\item contains $v$;
\item contains $vg$.
\end{itemize}

This subtree is the union of $T_H$ and the orbits of paths from $v$ and $vg$ to $T_H$; since these paths are finite the quotient $T_1/H$ is again finite.

Let $u_0$ be the vertex of $\mathcal{H}$ corresponding to the $H$-orbit of $v$: this will be our basepoint for $\mathcal{H}$. Let $\mathcal{H}$ be the quotient graph of groups obtained from the action of $H$ on $T_1$, choosing the subtree $S^v$ to contain $v$.

The group and graph inclusions $H \to G$ and $T_1 \to T$ induce an immersion $\Phi: \mathcal{H} \to \mathcal{G}$, as described in Proposition~\ref{prop:induced_morphism}. This realises the embedding of $H$ into $G$ as $\Phi_{u_0}: \pi_1(\mathcal{H},u_0) \to \pi_1(\mathcal{G},v_0)$; careful choice of subtrees avoids any conjuations appearing in this map, so a cyclically reduced loop at $u_0$ will be mapped to a cyclically reduced loop at $v_0$

Since $H$ is finitely generated, it also has finite Kurosh rank since this is bounded above by the true rank. So Proposition~\ref{prop:subtree_invariance_kr} implies that $\mathcal{H}$ is a finite graph of groups, since in constructing it we used a tree which differed from $T_H$ only by the addition of the orbits of two finite paths. Notice that since the Kurosh rank must not change when we add these paths, there are no non-trivial vertex stabilisers: the ``interest'' will be captured by the elements $\delta_e$ of the immersion $\Phi$.

The path $p$ from $vg$ to $v$ %(maybe vice versa?) 
in $T$ and $T_1$ is the lift of $g$ as a loop in $\mathcal{G}$, and quotients to a path beginning at $u_0$ in $\mathcal{H}$. Since $g$ is not an element of $H$, either this path is not a loop, or it is a loop but the final group element $s$ at $G_{v_0}$ is not the image of an element in $H_{u_0}$.

In the first case, we can apply Theorem~\ref{thm:embiggening_the_vertex_groups} directly. Since $G$ was a free product of subgroup separable groups, all the $\phi_v(H_v)$ are separable in $G_{\varphi(v)}$. Since no identifications are made between edges or vertices in this process, the path $p$ will still not be a loop in $\mathcal{K}$, and therefore $\pi_1(\mathcal{K},u_0)$ will not be an element of $K$.

If, on the other hand, $p$ is a loop, we must slightly adapt the proof. When the sets $X_v$ are constructed (containing the elements that must be excluded from each $K_v$), we simply add $s$ to $X_{u_0}$. Then $s$ will still not be the image of an element of $K_{u_0}$, so $g$ is not an element of $K$. \qedhere

\end{proof}


\begin{thebibliography}{10}

\bibitem{antolin2014kurosh}
Yago Antol{\'i}n, Armando Martino, and Inga Schwabrow.
\newblock Kurosh rank of intersections of subgroups of free products of
  right-orderable groups.
\newblock {\em Math. Res. Lett.}, 21(4):649--661, 2014.

\bibitem{Bass1993}
Hyman Bass.
\newblock Covering theory for graphs of groups.
\newblock {\em J. Pure Appl. Algebra}, 89(1-2):3--47, 1993.

\bibitem{BurnsFreeGpLERF}
R.~G. Burns.
\newblock A note on free groups.
\newblock {\em Proc. Amer. Math. Soc.}, 23:14--17, 1969.

\bibitem{Burns1971}
R.~G. Burns.
\newblock On finitely generated subgroups of free products.
\newblock {\em J. Austral. Math. Soc.}, 12:358--364, 1971.

\bibitem{GrushkoDecomp}
I.~Gruschko.
\newblock \"{U}ber die {B}asen eines freien {P}roduktes von {G}ruppen.
\newblock {\em Rec. Math. [Mat. Sbornik] N.S.}, 8 (50):169--182, 1940.

\bibitem{HallFreeGpLERF}
Marshall Hall, Jr.
\newblock Coset representations in free groups.
\newblock {\em Trans. Amer. Math. Soc.}, 67:421--432, 1949.

\bibitem{KuroshSubgroup}
Alexander Kurosch.
\newblock Die {U}ntergruppen der freien {P}rodukte von beliebigen {G}ruppen.
\newblock {\em Math. Ann.}, 109(1):647--660, 1934.

\bibitem{Romanovskii1969}
N.~S. Romanovski\u{\i}.
\newblock On the residual finiteness of free products with respect to
  subgroups.
\newblock {\em Izv. Akad. Nauk SSSR Ser. Mat.}, 33:1324--1329, 1969.

\bibitem{Serre2003}
Jean-Pierre Serre.
\newblock {\em Trees}.
\newblock Springer Monographs in Mathematics. Springer-Verlag, Berlin, 2003.
\newblock Translated from the French original by John Stillwell, Corrected 2nd
  printing of the 1980 English translation.

\bibitem{Stallings1983}
John~R. Stallings.
\newblock Topology of finite graphs.
\newblock {\em Invent. Math.}, 71(3):551--565, 1983.

\bibitem{Wilton2007}
Henry Wilton.
\newblock Elementarily free groups are subgroup separable.
\newblock {\em Proc. Lond. Math. Soc. (3)}, 95(2):473--496, 2007.

\end{thebibliography}
\end{document}